\documentclass[a4paper,11pt]{amsart}
\usepackage[margin=1in]{geometry}
\usepackage{amsmath,amssymb,mathtools,microtype}
\usepackage[hidelinks]{hyperref}

\newtheorem{theorem}{Theorem}[section]
\newtheorem{lemma}[theorem]{Lemma}

\newtheorem{proposition}[theorem]{Proposition}
\theoremstyle{remark}
\newtheorem{remark}[theorem]{Remark}

\newcommand{\C}{\mathbb C}
\newcommand{\D}{\mathbb D}
\newcommand{\eps}{\varepsilon}
\newcommand{\e}{\mathrm e}

\title{Exterior Power Sums}
\author{
	Yanping Luo (Sichuan University) \\
	\texttt{2024222010045@stu.scu.edu.cn} \\[6pt]
	Ruiyi Yang (Fudan University) \\
	\texttt{23307090018@m.fudan.edu.cn} \\[6pt]
	Keheng Zhu (Capital Normal University) \\
	\texttt{2240502168@cnu.edu.cn}
}
\date{}

\begin{document}
\begin{abstract}
  We prove that for every fixed $\lambda>0$ and all sufficiently large $n$, any $z_1,\dots,z_n\in\C$ with $|z_j|\geq1$ satisfy
$\max_{2\leq k\leq n+1}|\sum_j z_j^k|>e^{-\lambda n}$.  Consequently, the $n$th root of the optimal maximum tends to $1$, so no constant $C>1$ in Erd\H{o}s 973 can exist.  The proof combines a truncated exponential factorization with overconvergence on an open set outside the unit disk and a normal-family obstruction for Cauchy transforms.
\end{abstract}
\maketitle

\section{Introduction}

For $z_1,\dots,z_n\in\C$, write
\[
 p_k(z)=\sum_{j=1}^n z_j^k,
 \qquad
 P_n(z)=\max_{2\leq k\leq n+1}|p_k(z)|.
\]
Erd\H{o}s asked whether there is a constant $C>1$ such that, for every $n\geq2$, one can choose $z_1=1$ and $|z_j|\geq1$ with $P_n(z)<C^{-n}$; see Erd\H{o}s~\cite[p.~213]{Erdos65} and Hayman~\cite[Problem~7.3]{Hayman74}.  For the corresponding problem with the points in the closed unit disk, Erd\H{o}s obtained an exponentially small construction; Tur\'an records this on p.~35 of~\cite{Turan84}, and L.~Erd\H{o}s later determined a much sharper exponential interval~\cite{Erdos92}.  In the exterior problem, Tur\'an's first main theorem gives the lower bound
\[
 P_n(z)\geq (2e)^{-(1+o(1))n};
\]
see~\cite[Theorem~6.1]{Turan84} and the discussion in~\cite{Bloom973}.  The problem was still recorded as open at the time of writing~\cite{Bloom973}. We use two minima, one without and one with the normalization $z_1=1$:
\[
 \Delta_n=\inf_{\substack{z_1,\dots,z_n\in\C\\ |z_j|\geq1}}P_n(z),
 \qquad
 \Delta_n^*=\inf_{\substack{z_1=1,\ z_2,\dots,z_n\in\C\\ |z_j|\geq1}}P_n(z).
\]
Our result is the following.

\begin{theorem}\label{thm:main}
For every $\lambda>0$ there exists $N(\lambda)$ such that, whenever $n\geq N(\lambda)$ and $z_1,\dots,z_n\in\C$ satisfy $|z_j|\geq1$, one has
\[
 \max_{2\leq k\leq n+1}\left|\sum_{j=1}^n z_j^k\right|>e^{-\lambda n}.
\]
Consequently,
\[
 \lim_{n\to\infty}\Delta_n^{1/n}
 =\lim_{n\to\infty}(\Delta_n^*)^{1/n}=1.
\]
In particular, no constant $C>1$ has the property asked for by Erd\H{o}s.
\end{theorem}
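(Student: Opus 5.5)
We argue by contradiction. Suppose there are $\lambda>0$, infinitely many $n$, and points $z^{(n)}=(z_1,\dots,z_n)$ with $|z_j|\ge1$ and $|p_k|\le e^{-\lambda n}$ for $2\le k\le n+1$. Set $w_j=1/z_j\in\overline{\D}$ and use the generating function
\[
 F_n(t)=\sum_{j=1}^n \frac{1}{1-t z_j}=n+\sum_{k\ge1}p_k t^k ,\qquad |t|<\min_j|w_j| .
\]
Equivalently, consider $Q_n(t)=\prod_j(1-tz_j)$, for which $-tQ_n'/Q_n=\sum_{k\ge1}p_kt^k$. Newton's identities express the elementary symmetric functions $e_1,\dots,e_{n}$ through $p_1,\dots,p_{n}$. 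Since $p_2,\dots,p_{n+1}$ are tiny, the truncated exponential factorization gives
\[
 Q_n(t)=\exp\Bigl(-\sum_{k\le n}p_kt^k/k\Bigr)\ \text{truncated at degree } n ,
\]
which is $\approx e^{-p_1t}$ truncated at degree $n$, up to an error of size $e^{-\lambda n}C^n$ on $|t|\le r$.

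First I will show that $p_1$ is not huge. If $|p_1|$ were large, the Taylor section $\sum_{m\le n}(-p_1t)^m/m!$ would have its zeros (Szeg\H{o} curve) at $|t|\approx n/(e|p_1|)$. But every zero of $Q_n$ lies at $|t|=|w_j|\le1$, and the tiny perturbation moves zeros only slightly, which forces $|p_1|\lesssim n$. Scaling $t=s n/p_1$ then reduces the polynomial to the normalized section $\sum_{m\le n}(-ns)^m/m!$, whose zeros accumulate on the Szeg\H{o} curve $|s e^{1-s}|=1$, $|s|\le1$.

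The key step is overconvergence. The relation $Q_n\approx e^{-p_1t}$ holds coefficientwise with error $e^{-\lambda n}$ on degrees $2..n+1$ in the log-derivative. Hence the normalized rational functions
\[
 \frac1n\sum_j\frac{1}{1-tz_j}-\frac{p_1 t}{n}
\]
are exponentially close to the constant $1$ on a disk $|t|<\rho$, in terms of the first $n$ Taylor coefficients. By a Bernstein–Walsh / Hadamard-type argument, exponential smallness of $n$ consecutive coefficients, together with the trivial bound $|p_k|\le n$ (which holds since $|w_j|\le1$ after rewriting in $w$), yields convergence on an open set beyond the singularity circle. In the $w$-variable this is an open set $U$ outside $\overline{\D}$ of the Cauchy transforms
\[
 C_n(\zeta)=\frac1n\sum_j\frac{w_j}{\zeta-w_j}
\]
of the empirical measures $\mu_n$ of the $w_j$.

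Next I pass to the limit. Take a weak-* limit $\mu$ of $\mu_n$, a probability measure on $\overline{\D}$. Its Cauchy transform $C_\mu$ is then forced to equal an explicit function: the near-vanishing of $p_2,\dots,p_{n+1}$ says $\int w^{-k}\,d\mu\approx0$ in normalized form. The normal-family obstruction then enters. The $C_n$ are meromorphic with poles at the $w_j$, and they converge locally uniformly on $U$, which meets the region where the limit must analytically continue across $|w|=1$. Montel's theorem, together with the identity theorem, forces $C_\mu$ to extend analytically into part of the support of $\mu$. Combined with $\mu\ge0$ and total mass $1$, this gives a contradiction, since the measure would have to be zero near a point of its support where the jump of the Cauchy transform is nonzero.

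I expect the overconvergence step to be the main obstacle: it requires a quantitative bound converting $n$ consecutive small coefficients into uniform control outside the unit circle, while the count of $w_j$ near the boundary is uncontrolled. Once the main bound holds, the consequences are immediate. Tur\'an's bound and the trivial monotonicity give $\Delta_n\le\Delta_n^*\le$ a subexponential bound; for instance, choosing roots of unity shows $\Delta_n^*\le n$ for the upper side. Hence both $n$th roots tend to $1$, and no constant $C>1$ can exist.
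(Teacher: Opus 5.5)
Your outline has the right architecture (exponential factorization, overconvergence outside $\overline\D$, normal families of Cauchy transforms), but the quantitative steps that carry the argument are wrong or missing.

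\textbf{Control of $p_1$.} Your zero-location heuristic points the wrong way. If $|p_1|\gg n$, the zeros of $E_n(-p_1t)$ lie in $|t|\le n/|p_1|$, deep inside $\overline\D$. That is perfectly compatible with $|w_j|\le1$; it only means some $|z_j|$ are large. In fact nothing that uses only $p_2,\dots,p_n$ can bound $p_1$. For any $|x|\ge n$, Enestr\"om--Kakeya puts the zeros of $E_n(xt)$ in $|t|\le n/|x|\le1$. Letting the $z_j$ be their reciprocals gives $\prod_j(1-z_jt)=E_n(xt)$, hence $p_2=\dots=p_n=0$ and $p_1=-x$, with all $|z_j|\ge1$. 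Your truncation ``$\exp(-\sum_{k\le n}p_kt^k/k)$ at degree $n$'' throws away exactly the information that matters.

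The paper keeps degree $n+1$. Since $\deg F_n=n$, the $t^{n+1}$-coefficient of $e^{x_nt}A_n(t)$ must vanish. Combined with the majorant $\sum_{\ell\ge2}|d_{\ell,n}|R^\ell\le \e^{\eps_nb_n(R)}-1$, this gives $\limsup|x_n|/n\le1$, and then, by Stirling, $\lambda\le r-1-\log r$, hence $r<1$ strictly. The leading coefficient $|\prod_jz_j|\ge1$ separately gives $r>0$; the paper needs this because its final lemma only forces a constant limit to vanish. The strict bound $r<1$ is what makes overconvergence possible at all. Comparing $E_n(x_nw)$ with $\e^{x_nw}$ requires $|x_nw|\le Qn$ with $Q<1$, so a usable point with $|w|>1$ exists only if $r<1$.

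\textbf{Overconvergence.} This step has no working mechanism. The ``trivial bound $|p_k|\le n$'' is false: $p_k=\sum_jz_j^k$ with $|z_j|\ge1$ can be enormous. Only $\sum_jw_j^k$ is bounded, and those are not the coefficients you control. Nor do the negative moments $\int w^{-k}\,d\mu_n$ survive weak-$*$ limits, since the $w_j$ may approach $0$. Overconvergence is also not direction-free: $F_n\approx \e^{x_nw}$ can only hold where $\e^{\operatorname{Re}(x_nw)}$ dominates both the section tail and the perturbation.

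The paper uses the exact identity $F_n(w)=\sum_{\ell=0}^n d_{\ell,n}w^\ell E_{n-\ell}(x_nw)$ on a small disk about $w_0=\rho\overline{c}/r$, with $1<\rho<\min\{\e^\lambda,r^{-1}\}$. There $|\e^{x_nw}|\approx \e^{qn}$ with $q=r\rho<1$. The tail is $\lesssim \e^{(1+\log q)n}$ and the perturbation is $\lesssim \e^{(q-\lambda+\log\rho)n}$. Both are exponentially smaller because $q>1+\log q$ and $\rho<\e^\lambda$.

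\textbf{Endgame.} Your final contradiction is placed in the wrong location. An exterior Cauchy transform that continues analytically across part of the support is no contradiction. For normalized arc length $\mu$ on the unit circle, $\int w\,d\mu(w)/(\zeta-w)$ vanishes identically for $|\zeta|>1$ and continues across the circle, despite the jump. The contradiction must be taken at infinity. Montel's theorem and the identity theorem propagate the limit to all of $\C\setminus\overline\D$. In your normalization that limit is $c\zeta-1$ with $c=\lim(-p_1/n)$; in the paper's it is the constant $c$ for $\frac1nF_n'/F_n$. Either is incompatible with $|C_n(\zeta)|\le 1/(|\zeta|-1)$.
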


Turturean~\cite{Turturean26} recently proved
\[
 \Delta_n^*\geq
 \exp\!\left(-(0.0597505151\ldots+o(1))n\right),
\]
which implies that any affirmative constant would have to satisfy $C\leq1.0615716674\ldots$.  Several ingredients used below already occur in that manuscript: the representation of
$F(t)=\prod_j(1-z_jt)$ as an exponential times a coefficient-small perturbation, the resulting coefficient identity at degree $n+1$, the estimate $|p_1|\leq(1+o(1))n$, and the inequality $\lambda\leq r-1-\log r$ for a subsequential limit $r=|p_1|/n$; see~\cite[Sections~2--3]{Turturean26}.  We reproduce the precise statements needed here with proofs.

The leading coefficient of $F$ shows that the normalized first power sum cannot tend to zero; this also removes the condition $z_1=1$.  The exponential approximation is made not merely at boundary points but uniformly on a fixed open disk lying outside $\overline\D$.  On that disk the normalized logarithmic derivatives converge to a nonzero constant.  Writing $\alpha_j=z_j^{-1}\in\overline\D$, those logarithmic derivatives become
\[
 \frac1n\frac{F'(w)}{F(w)}
 =\frac1n\sum_{j=1}^n\frac1{w-\alpha_j},
\]
Cauchy transforms of probability measures supported in $\overline\D$.  Normality and the identity theorem then force every constant limit to be zero.  This contradiction proves the theorem.

\section{Algebraic preparation}

We argue by contradiction along an unbounded sequence of integers.  Fix $\lambda>0$, and suppose that for each $n$ in this sequence there are $z_{1,n},\dots,z_{n,n}$ with $|z_{j,n}|\geq1$ and
\begin{equation}\label{eq:hypothesis}
 \eps_n:=\max_{2\leq k\leq n+1}|p_{k,n}|\leq\e^{-\lambda n},
 \qquad
 p_{k,n}:=\sum_{j=1}^n z_{j,n}^k.
\end{equation}
We suppress the second subscript when no confusion can arise.  Put
\[
 F_n(t)=\prod_{j=1}^n(1-z_{j,n}t),
 \qquad x_n=-p_{1,n},
\]
and define
\[
 D_n(t)=\sum_{k=2}^{n+1}\frac{p_{k,n}}{k}t^k,
 \qquad
 A_n(t)=e^{-D_n(t)}=\sum_{\ell\geq0}d_{\ell,n}t^\ell.
\]
For $R\geq0$, let
\[
 b_n(R)=\sum_{k=2}^{n+1}\frac{R^k}{k}.
\]

\begin{lemma}\label{lem:factor}
As formal power series through degree $n+1$:
\[
 F_n(t)\equiv e^{x_nt}A_n(t)\pmod {t^{n+2}}.
\]
Consequently, for $0\leq m\leq n$,
\begin{equation}\label{eq:coeff}
 [t^m]F_n(t)=\sum_{\ell=0}^m d_{\ell,n}\frac{x_n^{m-\ell}}{(m-\ell)!},
\end{equation}
and
\begin{equation}\label{eq:endpoint}
 0=\frac{x_n^{n+1}}{(n+1)!}
   +\sum_{\ell=2}^{n+1}d_{\ell,n}
       \frac{x_n^{n+1-\ell}}{(n+1-\ell)!}.
\end{equation}
Moreover, $d_{0,n}=1$, $d_{1,n}=0$, and for every $R\geq0$,
\begin{equation}\label{eq:majorant}
 \sum_{\ell\geq2}|d_{\ell,n}|R^\ell
 \leq \exp\!\bigl(\eps_n b_n(R)\bigr)-1.
\end{equation}
\end{lemma}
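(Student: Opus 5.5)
The plan is to take the logarithm of $F_n$ as a formal power series and read off every claim from it.

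\textbf{Factorization.} Since $F_n(0)=1$, the formal logarithm is available, and
\[
 \log F_n(t)=\sum_{j=1}^n\log(1-z_jt)=-\sum_{k\geq1}\frac{p_k}{k}t^k .
\]
Splitting off the $k=1$ term, which equals $-p_1t=x_nt$, and the terms $2\leq k\leq n+1$, which together equal $-D_n(t)$, we get
\[
 \log F_n(t)\equiv x_nt-D_n(t)\pmod{t^{n+2}}.
\]
Exponentiation commutes with truncation modulo $t^{n+2}$, because both sides have zero constant term. Hence $F_n\equiv e^{x_nt}e^{-D_n}=e^{x_nt}A_n\pmod{t^{n+2}}$.

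\textbf{Coefficient identities.} Formula \eqref{eq:coeff} is the Cauchy product of $e^{x_nt}=\sum x_n^r t^r/r!$ with $\sum_\ell d_{\ell,n}t^\ell$, read off at degree $m\leq n+1$. For \eqref{eq:endpoint}, note that $F_n$ has degree $n$, so $[t^{n+1}]F_n=0$. Applying the same convolution at degree $n+1$, which is still below the truncation order $n+2$, gives
\[
 0=\sum_{\ell=0}^{n+1}d_{\ell,n}\frac{x_n^{n+1-\ell}}{(n+1-\ell)!}.
\]
Inserting $d_{0,n}=1$ and $d_{1,n}=0$ yields exactly \eqref{eq:endpoint}. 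These two values hold because $D_n$ starts at degree $2$, so $e^{-D_n}=1+O(t^2)$.

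\textbf{Majorant.} Use the standard dominance argument: if a series $G$ has nonnegative coefficients dominating those of $H$ coefficientwise, then $\exp(G)$ dominates $\exp(H)$ coefficientwise. This holds because each coefficient of $e^{H}$ is a polynomial with nonnegative rational coefficients in the coefficients of $H$. By \eqref{eq:hypothesis}, the coefficients of $-D_n$ satisfy $|p_k/k|\leq \eps_n/k$ for $2\le k\le n+1$, so $-D_n$ is dominated by
\[
 G(t)=\eps_n\sum_{k=2}^{n+1}\frac{t^k}{k}.
\]
Then $|d_{\ell,n}|\leq[t^\ell]e^{G}$ for all $\ell$. Since $G$ is a polynomial, $e^{G}$ is entire with nonnegative coefficients. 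Summing at $t=R\geq0$ gives
\[
 \sum_{\ell\ge0}|d_{\ell,n}|R^\ell\leq e^{G(R)}=\exp(\eps_n b_n(R)).
\]
Subtracting the $\ell=0$ term, which is $1$, and the $\ell=1$ term, which is $0$, gives \eqref{eq:majorant}.

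No step is a real obstacle. The only point needing care is to justify the formal exponential identities modulo $t^{n+2}$ rather than analytically. In particular, the $p_k$ with $k>n+1$ are uncontrolled and $|z_j|\geq1$, so the logarithm series need not converge; the argument must stay purely formal up to degree $n+1$.
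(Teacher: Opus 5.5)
Your proposal is correct and follows the paper's proof essentially step for step: you take the formal logarithm truncated at degree $n+1$, compare coefficients at degrees $m\le n$ and $n+1$, and bound $e^{-D_n}$ by a majorant. Your coefficientwise-domination argument is only a rephrasing of the paper's submultiplicative weighted norm $\sum_m|a_m|R^m$, and both give $\sum_{\ell\ge0}|d_{\ell,n}|R^\ell\le\exp(\eps_n b_n(R))$.
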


\begin{proof}
In the ring $\C[[t]]$,
\[
 \log F_n(t)=\sum_{j=1}^n\log(1-z_{j,n}t)
 =-\sum_{k\geq1}\frac{p_{k,n}}{k}t^k.
\]
Thus $\log F_n(t)=x_nt-D_n(t)+O(t^{n+2})$, and exponentiation gives the stated congruence.  Formula~\eqref{eq:coeff} follows by comparing coefficients.  Since $F_n$ has degree $n$, its coefficient of $t^{n+1}$ is zero; comparison at that degree gives~\eqref{eq:endpoint}, using $d_{0,n}=1$ and $d_{1,n}=0$.

For the majorant, expand the exponential defining $A_n$.  The weighted coefficient norm $\|\sum a_m t^m\|_R=\sum_m|a_m|R^m$ is submultiplicative, and hence
\[
 \sum_{\ell\geq0}|d_{\ell,n}|R^\ell
 \leq \exp\left(\sum_{k=2}^{n+1}\frac{|p_{k,n}|}{k}R^k\right)
 \leq \exp\!\bigl(\eps_n b_n(R)\bigr).
\]
Subtracting the constant coefficient $d_{0,n}=1$ proves~\eqref{eq:majorant}.
\end{proof}

The next lemma identifies the only possible linear scale of the uncontrolled first power sum.

\begin{lemma}\label{lem:scale}
One has
\begin{equation}\label{eq:limsup-x}
 \limsup_{n\to\infty}\frac{|x_n|}{n}\leq1.
\end{equation}
After passage to a subsequence,  write
\begin{equation}\label{eq:c-limit}
 \frac{x_n}{n}\longrightarrow c,
 \qquad r=|c|.
\end{equation}
Then
\begin{equation}\label{eq:r-range}
 0<r<1,
 \qquad
 \lambda\leq r-1-\log r.
\end{equation}
\end{lemma}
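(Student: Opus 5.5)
The plan is to extract all three conclusions from Cauchy-type coefficient estimates on the identity of Lemma~\ref{lem:factor}, using the weighted norm $\|\cdot\|_R$ from its proof with well-chosen radii $R$. Write $s_n=|x_n|/n$. The basic inequality comes from comparing coefficients of $t^{n+1}$ in \eqref{eq:endpoint}: by submultiplicativity of $\|\cdot\|_R$ and \eqref{eq:majorant},
\[
 \frac{|x_n|^{n+1}}{(n+1)!}
 =\Bigl|[t^{n+1}]\,e^{x_nt}\bigl(A_n(t)-1\bigr)\Bigr|
 \leq \frac{e^{|x_n|R}\bigl(\exp(\eps_n b_n(R))-1\bigr)}{R^{n+1}}
 \qquad(R>0).
\]
I will only use radii $R\leq1$. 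There $b_n(R)\leq n$, so $\exp(\eps_nb_n(R))-1\leq 2n\e^{-\lambda n}$ for large $n$. Stirling gives $|x_n|^{n+1}/(n+1)!=(\e s_n)^{n+1}\e^{O(\log n)}$ whenever $s_n$ stays bounded below.

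\emph{Step 1 (the bound \eqref{eq:limsup-x}).} Suppose $s_n\geq1$ along a subsequence, and take $R=1/s_n\leq1$, so that $|x_n|R=n$. The inequality becomes
\[
 (\e s_n)^{n+1}\e^{O(\log n)}\leq \e^{n}\cdot 2n\e^{-\lambda n}s_n^{n+1},
\]
which reads $\e^{1+O(\log n)}\leq \e^{-\lambda n}$. This is impossible for large $n$. Hence $s_n<1$ eventually, which gives \eqref{eq:limsup-x}, and we may pass to a subsequence with $x_n/n\to c$ and $r=|c|\leq1$.

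\emph{Step 2 (the inequality $\lambda\leq r-1-\log r$).} Take $R=1$ in the basic inequality. Then
\[
 (\e s_n)^{n+1}\e^{O(\log n)}\leq \e^{ns_n}\,2n\e^{-\lambda n}.
\]
Taking logarithms, dividing by $n$ and letting $n\to\infty$ gives $1+\log r\leq r-\lambda$, that is, $\lambda\leq r-1-\log r$, provided $r>0$. Since $\lambda>0$ and $u-1-\log u$ vanishes only at $u=1$, this also excludes $r=1$, so $r<1$ once $r>0$ is known.

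\emph{Step 3 (the bound $r>0$).} This is the step I expect to need the most care, because \eqref{eq:endpoint} alone does not prevent $x_n$ from being small. Here I would use the leading coefficient instead: $|[t^n]F_n|=\prod_j|z_{j,n}|\geq1$. Fix $0<\mu<\lambda$ and put $R=\e^{\mu}$. Then $\eps_nb_n(R)\leq n\e^{-\lambda n}\e^{\mu(n+1)}\to0$. Applying \eqref{eq:coeff} at $m=n$ with the same norm bound gives
\[
 1\leq \e^{|x_n|R}\exp\bigl(\eps_nb_n(R)\bigr)R^{-n}=\e^{n s_n\e^{\mu}-\mu n+o(1)}.
\]
Hence $\liminf s_n\geq\mu\e^{-\mu}>0$, so $r>0$. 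Feeding this into Step~2 then yields $0<r<1$ and \eqref{eq:r-range}.

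The only delicate bookkeeping is keeping the Stirling and polynomial factors at $\e^{o(n)}$. That is automatic once $s_n$ is bounded away from $0$, which Step~3 provides. The logical order is therefore Step~1, then Step~3, then Step~2.
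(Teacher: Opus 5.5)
Your proof is correct. Step~2 is exactly the paper's endpoint estimate, but Steps~1 and~3 take different routes.

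For \eqref{eq:limsup-x}, the paper divides \eqref{eq:endpoint} by $x_n^{n+1}/(n+1)!$ and bounds the falling-factorial ratios by $q_n^\ell$, with $q_n=(n+1)/|x_n|\le q_0<1$. Then $b_n(q_n)$ stays bounded, and only $\eps_n\to0$ is needed. Your Cauchy estimate at radius $1/s_n$ cancels the factors $\e^n$ exactly and leaves only polynomial losses, which you absorb using $\eps_n\le\e^{-\lambda n}$. It also gives the marginally stronger statement $s_n<1$ for all large $n$.

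For $r>0$, the paper stays at $R=1$ and splits off the $\ell=0$ term $|x_n|^n/n!$. It then argues by contradiction that both pieces tend to $0$ when $r=0$. You instead take $R=\e^\mu$ with $1<R<\e^\lambda$ and bound $\|\e^{x_nt}A_n\|_R$ in one stroke. This is the same device the paper only introduces later, in Proposition~\ref{prop:exterior-approximation}. It gives an explicit bound $r\ge\mu\e^{-\mu}$ for every $\mu<\lambda$, rather than mere positivity. Combined with $\lambda\le r-1-\log r$, this already makes the hypothesis \eqref{eq:hypothesis} untenable for every $\lambda\ge1/2$.

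The paper's argument is qualitative, and its first step needs less decay of $\eps_n$. Yours runs all three conclusions through a single template: coefficient extraction in the weighted norm at a well-chosen radius.
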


\begin{proof}
Suppose first that $|x_n|\geq(1+\delta)n$ along a subsequence where $\delta>0$.  Set $q_n=(n+1)/|x_n|$.  Then $q_n\leq q_0<1$ for all large $n$.  Dividing~\eqref{eq:endpoint} by $x_n^{n+1}/(n+1)!$, taking absolute values, and using
\[
 \frac{(n+1)!}{(n+1-\ell)!\,|x_n|^\ell}
 =\prod_{j=0}^{\ell-1}\frac{n+1-j}{|x_n|}
 \leq q_n^\ell,
\]
give
\[
 1\leq\sum_{\ell=2}^{n+1}|d_{\ell,n}|q_n^\ell
 \leq \exp\!\bigl(\eps_n b_n(q_n)\bigr)-1.
\]
The quantities $b_n(q_n)$ are uniformly bounded because $q_n\leq q_0<1$, whereas $\eps_n\to0$.  The right-hand side therefore tends to zero, a contradiction.  This proves the asserted upper bound.

We may now pass to a subsequence for which $x_n/n\to c$ with $|c|\leq1$.  We show that $c\neq0$.  The leading coefficient of $F_n$ has modulus
\[
 |[t^n]F_n(t)|=\prod_{j=1}^n|z_{j,n}|\geq1.
\]
Using~\eqref{eq:coeff} with $m=n$, then~\eqref{eq:majorant} at $R=1$, gives

\begin{equation}\label{eq:leading-bound}
 1\leq \frac{|x_n|^n}{n!}
 +\e^{|x_n|}\sum_{\ell=2}^n|d_{\ell,n}|
 \leq \frac{|x_n|^n}{n!}
 +\e^{|x_n|}\bigl(\exp(\eps_n b_n(1))-1\bigr).
\end{equation}
If $r=0$, then $|x_n|/n\to0$, and
\[
 \frac{|x_n|^n}{n!}
 \leq\left(\frac{\e|x_n|}{n}\right)^n\longrightarrow0.
\]
Also $b_n(1)\leq1+\log(n+1)$, so $\eps_n b_n(1)\to0$ and
\[
 \e^{|x_n|}\bigl(\exp(\eps_n b_n(1))-1\bigr)
 \leq2\e^{|x_n|}\eps_n b_n(1)
 =\exp(-\lambda n+o(n))\longrightarrow0.
\]
This contradicts~\eqref{eq:leading-bound}; hence $r>0$.  Notice that this is the only place where the assumption $|z_{j,n}|\geq1$ is used before the final Cauchy-transform argument, and it does not require any distinguished point $z_{1,n}=1$.

Finally,~\eqref{eq:endpoint} and~\eqref{eq:majorant} at $R=1$ imply
\begin{equation}\label{eq:endpoint-bound}
 \frac{|x_n|^{n+1}}{(n+1)!}
 \leq\e^{|x_n|}\bigl(\exp(\eps_n b_n(1))-1\bigr)
 \leq2\e^{|x_n|}\eps_n b_n(1)
\end{equation}
for all large $n$.  Since $|x_n|=rn+o(n)$ and $r>0$, Stirling's formula gives
\[
 \log\frac{|x_n|^{n+1}}{(n+1)!}
 =n(1+\log r)+o(n).
\]
The logarithm of the final expression in~\eqref{eq:endpoint-bound} is at most
\[
 |x_n|-\lambda n+\log\bigl(2b_n(1)\bigr)
 =n(r-\lambda)+o(n).
\]
Therefore $1+\log r\leq r-\lambda$, or equivalently
$\lambda\leq r-1-\log r$.  This endpoint estimate is the one proved in~\cite[Section~2]{Turturean26}.  Since $r\leq1$ and $\lambda>0$, the case $r=1$ is impossible, and~\eqref{eq:r-range} follows.
\end{proof}

\section{Exterior overconvergence and the contradiction}

For $m\geq0$, let
\[
 E_m(u)=\sum_{j=0}^m\frac{u^j}{j!},
\]
and set $E_{-1}=0$.  Summing the coefficient identity~\eqref{eq:coeff} gives the exact formula
\begin{equation}\label{eq:evaluation}
 F_n(w)=\sum_{\ell=0}^n d_{\ell,n}w^\ell E_{n-\ell}(x_nw).
\end{equation}

\begin{lemma}\label{lem:exponential-tail}Let $0<Q<1$ and $\eta>0$.  For all sufficiently large $n$ and every $u\in\C$ with $|u|\leq Qn$,
\begin{equation}\label{eq:tail-estimate}
 |\e^u-E_n(u)|+|\e^u-E_{n-1}(u)|
 \leq \exp\!\bigl((1+\log Q+\eta)n\bigr).
\end{equation}
\end{lemma}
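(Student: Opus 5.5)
The estimate is a standard bound on the tail of the exponential series, and I would prove it directly from the tail. For $m\in\{n-1,n\}$ and $|u|\leq Qn$,
\[
 |\e^u-E_m(u)|\leq\sum_{j\geq m+1}\frac{|u|^j}{j!}
 =\frac{|u|^{m+1}}{(m+1)!}\sum_{i\geq0}\frac{|u|^i\,(m+1)!}{(m+1+i)!}
 \leq\frac{|u|^{m+1}}{(m+1)!}\sum_{i\geq0}\left(\frac{|u|}{m+2}\right)^i .
\]
The inequality uses $(m+1)!/(m+1+i)!\leq (m+2)^{-i}$. Since $|u|\leq Qn$ and $m+2\geq n+1$, the ratio $|u|/(m+2)$ is at most $Q<1$. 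The geometric factor is therefore bounded by $1/(1-Q)$, a constant independent of $n$.

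Next I would bound the leading term. Because $|u|\leq Qn$, we get $|u|^{m+1}/(m+1)!\leq (Qn)^{m+1}/(m+1)!$. For $m=n$ and $m=n-1$, Stirling's formula $(m+1)!\geq ((m+1)/\e)^{m+1}$ gives
\[
 \frac{(Qn)^{m+1}}{(m+1)!}\leq\left(\frac{\e Qn}{m+1}\right)^{m+1}.
\]
When $m=n-1$ this equals $(\e Q)^n=\exp((1+\log Q)n)$ exactly. When $m=n$ it equals $(\e Qn/(n+1))^{n+1}\leq \e Q(\e Q)^n$. One caveat: if $\e Q>1$, the extra factor $\e Q\cdot n^{n+1}/(n+1)^{n+1}$ is still bounded by the constant $\e Q$, so the bound holds in either case.

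Putting these together, the left side of~\eqref{eq:tail-estimate} is at most $C_Q\exp((1+\log Q)n)$, where $C_Q=(1+\e Q)/(1-Q)$ does not depend on $n$ or $u$. Choosing $n$ large enough that $C_Q\leq \e^{\eta n}$ gives the claim, uniformly in $|u|\leq Qn$. I do not expect a genuine obstacle here. The only points needing care are using a uniform form of Stirling's bound, rather than an asymptotic one, so that the estimate is uniform in $u$, and checking the off-by-one in the exponent for $m=n-1$. The margin $\eta$ absorbs all the constant factors.
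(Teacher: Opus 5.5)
Your proof is correct and follows essentially the same route as the paper: bound each tail by its first term times a geometric series with ratio at most $Q<1$, then estimate $(Qn)^{m+1}/(m+1)!$ at exponential rate $(1+\log Q)n$ and absorb the constants into $\eta n$. The paper uses an auxiliary $Q'\in(Q,1)$ and the asymptotic Stirling formula with an $O(\log n)$ error; your ratio bound $|u|/(m+2)\leq Q$ and the explicit inequality $k!\geq (k/\e)^k$ give the same estimate in a slightly cleaner, uniform form.
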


\begin{proof}
Choose $Q'$ with $Q<Q'<1$.  For $j\geq n$ and all sufficiently large $n$,
\[
 \frac{|u|^{j+1}/(j+1)!}{|u|^j/j!}
 =\frac{|u|}{j+1}\leq Q'.
\]
Hence each of the two tails is bounded by a fixed geometric factor times its first term:
\[
 |\e^u-E_{n-1}(u)|
 \leq\frac1{1-Q'}\frac{|u|^n}{n!},
 \qquad
 |\e^u-E_n(u)|
 \leq\frac1{1-Q'}\frac{|u|^{n+1}}{(n+1)!}.
\]
Stirling's formula gives
\[
 \log\frac{(Qn)^n}{n!}=n(1+\log Q)+O(\log n),
\]
and the same exponential rate for the second term.  The $O(\log n)$ term and the fixed geometric factor are absorbed by $\eta n$ for large $n$, proving~\eqref{eq:tail-estimate}.
\end{proof}

\begin{lemma}\label{lem:disk-choice}
Assume~\eqref{eq:c-limit} and~\eqref{eq:r-range}.  There exist a closed disk
$K\Subset\{w:|w|>1\}$ with nonempty interior and constants $a,Q,R,\eta>0$ such that
\begin{equation}\label{eq:disk-data}
 R=\max_{w\in K}|w|<\e^\lambda,
 \qquad Q<1,
\end{equation}
\begin{equation}\label{eq:disk-bounds}
 \operatorname{Re}(cw)>a,
 \qquad |cw|<Q
 \quad (w\in K),
\end{equation}
and
\begin{equation}\label{eq:disk-margins}
 a>1+\log Q+5\eta,
 \qquad
 a>Q-\lambda+\log R+5\eta.
\end{equation}
\end{lemma}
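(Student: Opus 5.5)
The plan is to place the center of $K$ on the ray where $cw$ is real and positive, and then obtain every strict inequality by continuity. By Lemma~\ref{lem:scale} we have $c\neq0$ and $0<r=|c|<1$. For a parameter $\rho>0$, put $w_0=\rho/c$. Then $cw_0=\rho$ and $|w_0|=\rho/r$.

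At the single point $w_0$, the required conditions read, with $Q$, $R$, $a$ replaced by their limiting values $\rho$, $\rho/r$, $\rho$ and with $\eta=0$:
\[
 \rho/r>1,\qquad \rho<1,\qquad \rho/r<\e^\lambda,\qquad \rho>1+\log\rho,\qquad \rho>\rho-\lambda+\log(\rho/r).
\]
The fourth inequality holds for every $\rho\in(0,1)$, since $\rho-1-\log\rho>0$ for $\rho\neq1$. The fifth is equivalent to $\log(\rho/r)<\lambda$, which is the same as the third. Hence all five hold as soon as
\[
 r<\rho<\min(1,r\e^\lambda).
\]
This interval is nonempty because $r<1$ and $\lambda>0$. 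I fix such a $\rho$. Only $0<r<1$ from \eqref{eq:r-range} is needed here, not the bound $\lambda\le r-1-\log r$.

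The remaining step is a perturbation argument. Let $\theta>0$ be the smallest of the strict margins:
\[
 \rho/r-1,\qquad 1-\rho,\qquad \lambda-\log(\rho/r),\qquad \rho-1-\log\rho.
\]
Take $K=\{|w-w_0|\le\delta\}$. For $w\in K$ we have
\[
 |cw-\rho|\le r\delta,\qquad \bigl||w|-\rho/r\bigr|\le\delta .
\]
Set
\[
 Q=\rho+2r\delta,\qquad a=\rho-2r\delta,\qquad R=\max_K|w|=\rho/r+\delta .
\]
Then, for $w\in K$, $\operatorname{Re}(cw)\ge\rho-r\delta>a$ and $|cw|\le\rho+r\delta<Q$, which gives \eqref{eq:disk-bounds}.

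When $\delta\to0$, the quantities $Q$, $a$, $R$ tend to $\rho$, $\rho$, $\rho/r$, and the functions $\log Q$ and $\log R$ are continuous there. So for $\delta$ small enough, each of the inequalities
\[
 \min_K|w|>1,\qquad Q<1,\qquad R<\e^\lambda,\qquad a>1+\log Q,\qquad a>Q-\lambda+\log R
\]
still holds with margin at least $\theta/2$. In particular $\min_K|w|>1$ gives $K\Subset\{|w|>1\}$, and $R<\e^\lambda$, $Q<1$ give \eqref{eq:disk-data}. Finally, taking $\eta=\theta/20$ absorbs the $5\eta$ terms in \eqref{eq:disk-margins}.

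There is no real obstacle in this argument. The only point needing care is to notice that the two margin conditions in \eqref{eq:disk-margins} collapse to $\rho<1$ and $\rho<r\e^\lambda$, once $w$ is chosen so that $cw$ is a positive real number.
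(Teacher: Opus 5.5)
Your proof is correct and is essentially the paper's argument. Your center $w_0=\rho/c$ with $r<\rho<\min(1,r\e^\lambda)$ is exactly the paper's $w_0=\rho'\overline c/r$ with $1<\rho'<\min\{\e^\lambda,r^{-1}\}$ under the substitution $\rho'=\rho/r$, so your $\rho$ is the paper's $q$; both arguments then shrink the disk and use continuity of the strict margins, and your choices $Q=\rho+2r\delta$, $a=\rho-2r\delta$, $R=\rho/r+\delta$, $\eta=\theta/20$ just make that last step explicit.
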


\begin{proof}
Choose
\[
 1<\rho<\min\{\e^\lambda,r^{-1}\},
 \qquad
 w_0=\rho\frac{\overline c}{r},
 \qquad q=r\rho.
\]
Then $|w_0|=\rho>1$, $0<q<1$, and $cw_0=q$.  The two gaps
\[
 q-(1+\log q)>0,
 \qquad
 q-(q-\lambda+\log\rho)=\lambda-\log\rho>0
\]
are strict.  Choose $\eta>0$ smaller than one tenth of their minimum.  As a closed disk centered at $w_0$ shrinks to $\{w_0\}$, the minimum of $\operatorname{Re}(cw)$ and the maxima of $|cw|$ and $|w|$ tend respectively to $q,q,$ and $\rho$.  Hence a sufficiently small disk $K$ lies in $\{|w|>1\}$ and admits numbers $a,Q$ for which~\eqref{eq:disk-data}--\eqref{eq:disk-margins} hold.
\end{proof}

\begin{proposition}\label{prop:exterior-approximation}
Assume~\eqref{eq:c-limit} and~\eqref{eq:r-range}.  For the disk $K$ supplied by Lemma~\ref{lem:disk-choice}, there is $\sigma>0$ such that, uniformly for $w\in K$,
\begin{align}
 F_n(w)&=\e^{x_nw}\bigl(1+O(\e^{-\sigma n})\bigr)
 \label{eq:F-approximation}\\
 F_n'(w)&=x_n\e^{x_nw}
 +O\!\left(n\e^{\operatorname{Re}(x_nw)-\sigma n}\right)
 \label{eq:Fprime-approximation}
\end{align}
In particular, $F_n$ has no zero on $K$ for all sufficiently large $n$, and
\begin{equation}\label{eq:logder-convergence}
 \frac{F_n'(w)}{nF_n(w)}\longrightarrow c
\end{equation}
uniformly on $K$.
\end{proposition}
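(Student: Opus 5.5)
The plan is to start from the exact evaluation formula~\eqref{eq:evaluation}, put $u=x_nw$ for $w\in K$, and compare each truncated exponential $E_{n-\ell}(u)$ with $\e^u$. First I record the size information on $K$. Since $x_n/n\to c$ and the inequalities~\eqref{eq:disk-bounds} are strict on the compact set $K$, for all large $n$ and all $w\in K$ we have
\[
 \operatorname{Re}(x_nw)\geq an,
 \qquad
 |x_nw|\leq Qn.
\]
If necessary, $a$ and $Q$ are first perturbed slightly inside the strict margins~\eqref{eq:disk-margins}, with $\eta$ reduced accordingly. Hence $|\e^{u}|\geq \e^{an}$, and Lemma~\ref{lem:exponential-tail} applies to $u$. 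I also use $b_n(R)\leq C_R\,R^{n+1}$ when $R>1$ (and $b_n(R)=O(\log n)$ otherwise). Combined with~\eqref{eq:majorant}, since $\eps_nb_n(R)\to0$ because $R<\e^\lambda$, this gives
\[
 \sum_{\ell\geq2}|d_{\ell,n}|R^\ell\leq 2\eps_nb_n(R)\leq \exp\bigl((\log R-\lambda+\eta)n\bigr).
\]

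For~\eqref{eq:F-approximation}, I write, using $d_{0,n}=1$ and $d_{1,n}=0$,
\[
 F_n(w)=\e^{u}\Bigl(1+\sum_{\ell=2}^n d_{\ell,n}w^\ell\Bigr)
 -\bigl(\e^u-E_n(u)\bigr)
 -\sum_{\ell=2}^n d_{\ell,n}w^\ell\bigl(\e^u-E_{n-\ell}(u)\bigr).
\]
I then divide by $\e^u$ and bound the three error pieces separately.

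The first piece is at most $\exp((\log R-\lambda+\eta)n)$, which is exponentially small because $R<\e^\lambda$.

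The second piece is handled by Lemma~\ref{lem:exponential-tail}. Its quotient by $|\e^u|$ is at most
\[
 \exp\bigl((1+\log Q+\eta-a)n\bigr)\leq \e^{-4\eta n}.
\]

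For the third piece I avoid any fine tail estimate and use the crude bound $|\e^u-E_m(u)|\leq 2\e^{|u|}\leq 2\e^{Qn}$. The quotient is then at most
\[
 2\exp\bigl((Q-\lambda+\log R+\eta-a)n\bigr)\leq \e^{-3\eta n},
\]
by the second margin in~\eqref{eq:disk-margins}.

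Taking $\sigma$ a little smaller than $\min\{\lambda-\log R,\,3\eta\}$ gives~\eqref{eq:F-approximation} uniformly on $K$. In particular $F_n\neq0$ on $K$ for large $n$.

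For~\eqref{eq:Fprime-approximation} I differentiate~\eqref{eq:evaluation} term by term, using $\frac{d}{dw}E_m(x_nw)=x_nE_{m-1}(x_nw)$:
\[
 F_n'(w)=x_nE_{n-1}(u)+\sum_{\ell=2}^n d_{\ell,n}\bigl(\ell w^{\ell-1}E_{n-\ell}(u)+x_nw^\ell E_{n-\ell-1}(u)\bigr).
\]
The main term is controlled through the $E_{n-1}$ part of Lemma~\ref{lem:exponential-tail}, so that $x_nE_{n-1}(u)=x_n\e^u+O(n\,\e^{(1+\log Q+\eta)n})$. In the sum, I use $|E_m(u)|\leq\e^{Qn}$, $|x_n|=O(n)$, $\ell\leq n$ and $|w|^{\ell-1}\leq R^\ell$ (as $|w|>1$). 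This bounds the sum by $O\bigl(n\,\e^{Qn}\exp((\log R-\lambda+\eta)n)\bigr)$. Both errors are $O(n\e^{\operatorname{Re}(u)-\sigma n})$ by the same margins.

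Finally, I divide~\eqref{eq:Fprime-approximation} by $n$ times~\eqref{eq:F-approximation}. Since $|x_n|\geq rn/2$ eventually, this gives
\[
 \frac{F_n'(w)}{nF_n(w)}=\frac{x_n}{n}\cdot\frac{1+O(\e^{-\sigma n})}{1+O(\e^{-\sigma n})}\longrightarrow c
\]
uniformly on $K$.

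The main obstacle is bookkeeping the exponential rates so that every error is beaten by $|\e^u|\geq\e^{an}$. This is exactly what the two margins in~\eqref{eq:disk-margins} are designed for. The only delicate points are:
\begin{itemize}
\item the passage from the limit $c$ to uniform bounds for finite $n$, handled by the strictness and compactness argument above;
\item the polynomial factors such as $\ell\le n$ and $O(\log n)$, which are absorbed into $\eta$.
\end{itemize}
If the direct differentiation became awkward, a fallback is a Cauchy estimate. I would enlarge $K$ slightly, which the strict inequalities permit, apply the estimate to $F_n-\e^{x_nw}$ on circles of a small fixed radius $\delta$, and choose $\delta$ so that $|x_n|\delta<\sigma n/2$.
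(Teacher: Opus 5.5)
Your proposal is correct and follows essentially the same route as the paper. Both arguments start from the exact formula~\eqref{eq:evaluation}, use Lemma~\ref{lem:exponential-tail} for the $E_n$ and $E_{n-1}$ tails, bound the perturbation terms crudely by $e^{Qn}S_n(R)$ against the two margins in~\eqref{eq:disk-margins}, and differentiate the finite sum term by term for $F_n'$.

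The differences are only cosmetic. You split the paper's $G_n$ into $e^{u}\sum_{\ell\ge2}d_{\ell,n}w^\ell$ plus a remainder instead of bounding $G_n$ in one step. You also obtain $\operatorname{Re}(x_nw)\ge an$ and $|x_nw|\le Qn$ directly from strictness on the compact set $K$, rather than passing to $a-\delta$ and $Q+\delta$.
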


\begin{proof}
Fix the constants in Lemma~\ref{lem:disk-choice}.  By the strict margins in
\eqref{eq:disk-margins}, one may choose $\delta>0$ so small that, with
$Q_1=Q+\delta$,
\begin{equation}\label{eq:adjusted-margins}
 Q_1<1,\qquad
 a-\delta>1+\log Q_1+4\eta,
 \qquad
 a-\delta>Q_1-\lambda+\log R+4\eta.
\end{equation}
Since $\sup_{w\in K}|(x_n/n-c)w|\to0$, for all sufficiently large $n$ and $w\in K$,
\begin{equation}\label{eq:xnw-bounds}
 \operatorname{Re}(x_nw)\geq(a-\delta)n,
 \qquad
 |x_nw|\leq Q_1n.
\end{equation}
Lemma~\ref{lem:exponential-tail}, applied with $Q_1$, now gives
\begin{equation}\label{eq:tail-relative}
 |\e^{x_nw}-E_n(x_nw)|+|\e^{x_nw}-E_{n-1}(x_nw)|
 \leq \e^{\operatorname{Re}(x_nw)-3\eta n}
\end{equation}
uniformly on $K$.

We next estimate the perturbation coefficients at the radius $R>1$.  Since
\[
 b_n(R)\leq\sum_{k=2}^{n+1}R^k\leq\frac{R^{n+2}}{R-1},
\]
we have $\eps_nb_n(R)\to0$ by $R<\e^\lambda$.  Therefore~\eqref{eq:majorant} and the inequality $\e^y-1\leq2y$ for small $y\geq0$ imply
\begin{equation}\label{eq:S-bound-revised}
 S_n(R):=\sum_{\ell\geq2}|d_{\ell,n}|R^\ell
 \leq\exp\!\bigl((-\lambda+\log R+\eta)n\bigr)
\end{equation}
for all sufficiently large $n$.

Write~\eqref{eq:evaluation} as
\[
 F_n(w)=E_n(x_nw)+G_n(w),
 \qquad
 G_n(w)=\sum_{\ell=2}^n
 d_{\ell,n}w^\ell E_{n-\ell}(x_nw).
\]
The elementary bound $|E_m(u)|\leq\e^{|u|}$, together with
\eqref{eq:xnw-bounds}, \eqref{eq:S-bound-revised}, and
\eqref{eq:adjusted-margins}, gives
\begin{align*}
 |G_n(w)|
 &\leq \e^{Q_1n}S_n(R)\\
 &\leq\exp\!\bigl((Q_1-\lambda+\log R+\eta)n\bigr)
 \leq\e^{\operatorname{Re}(x_nw)-3\eta n}.
\end{align*}
Together with~\eqref{eq:tail-relative}, this proves~\eqref{eq:F-approximation}.

For the derivative, let $m_K=\min_{w\in K}|w|>1$.  Differentiating the finite sum defining $G_n$, using $|x_n|=O(n)$ from~\eqref{eq:limsup-x}, and estimating $|w|^{\ell-1}\leq R^\ell/m_K$, we obtain
\begin{align*}
 |G_n'(w)|
 &\leq\sum_{\ell=2}^n|d_{\ell,n}|
 \left(\ell|w|^{\ell-1}\e^{|x_nw|}
 +|x_n||w|^\ell\e^{|x_nw|}\right)\\
 &\leq C_K n\e^{Q_1n}S_n(R)
 \leq C_K n\e^{\operatorname{Re}(x_nw)-2\eta n}.
\end{align*}
Since $(E_n(x_nw))'=x_nE_{n-1}(x_nw)$, the second estimate in~\eqref{eq:tail-relative} yields~\eqref{eq:Fprime-approximation}, after absorbing fixed constants and taking, for instance, $\sigma=\eta$.  Dividing~\eqref{eq:Fprime-approximation} by $n$ times~\eqref{eq:F-approximation} gives~\eqref{eq:logder-convergence}, because $x_n/n\to c$.
\end{proof}

\section{Cauchy transforms and the conclusion}

We use Montel's theorem and the identity theorem in their usual forms; see~\cite[Chapter~V]{Conway78}.

\begin{lemma}\label{lem:cauchy-obstruction}
Let $\alpha_{1,n},\dots,\alpha_{n,n}\in\overline\D$ and define
\[
 C_n(w)=\frac1n\sum_{j=1}^n\frac1{w-\alpha_{j,n}}
 \qquad (w\in\Omega:=\C\setminus\overline\D).
\]
Suppose that $C_n$ converges locally uniformly to a constant $c$ on a nonempty open subset of $\Omega$.  Then $c=0$.
\end{lemma}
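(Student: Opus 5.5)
Our plan is to use normality of the family $\{C_n\}$ on all of $\Omega$, extract a subsequential limit, and read off its behaviour at infinity. First, each $C_n$ is holomorphic on $\Omega$, since every pole $\alpha_{j,n}$ lies in $\overline\D$. For $|w|\geq 1+\delta$ we have $|w-\alpha_{j,n}|\geq|w|-1\geq\delta$, so $|C_n(w)|\leq 1/(|w|-1)$. The family is therefore locally uniformly bounded on $\Omega$, and Montel's theorem applies. Passing to a subsequence, $C_n\to C$ locally uniformly on $\Omega$ for some $C$ holomorphic on $\Omega$. Along this subsequence $C_n$ still converges to the constant $c$ on the given nonempty open set $U\subset\Omega$, so $C\equiv c$ on $U$.

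Next we use connectedness. The domain $\Omega=\C\setminus\overline\D$ is connected, so the identity theorem forces $C\equiv c$ on all of $\Omega$. This is why we need the subsequential limit on the whole exterior rather than only on $U$: the limit of the full sequence is known only on $U$, but every Montel limit must agree with $c$ there and hence everywhere.

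Finally we evaluate at infinity. The bound $|C_n(w)|\leq1/(|w|-1)$ is uniform in $n$, so it passes to the limit and gives $|C(w)|\leq 1/(|w|-1)$ for every $w\in\Omega$. Letting $|w|\to\infty$, we get $|c|=|C(w)|\to0$, hence $c=0$.

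We do not see a genuine obstacle. The only point needing care is that the uniform bound must hold on all of $\Omega$, including near the unit circle, for Montel's theorem to apply on a connected domain. It does, since it is locally bounded on each region $\{|w|\geq1+\delta\}$. An alternative route skips Montel: the Laurent expansion $C_n(w)=\sum_{k\geq0}m_{k,n}w^{-k-1}$ has leading coefficient $1$, and one could use a limiting measure. The normal-family argument is cleaner.
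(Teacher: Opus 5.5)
Your proposal is correct and follows the paper's proof essentially step for step. The steps are the uniform bound $|C_n(w)|\le (|w|-1)^{-1}$, Montel's theorem on all of $\Omega$, and the identity theorem on the connected domain $\Omega$ to get $C\equiv c$; passing that bound to the limit and letting $|w|\to\infty$ then gives $c=0$.
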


\begin{proof}
The functions $C_n$ are the Cauchy transforms of the probability measures
$n^{-1}\sum_j\delta_{\alpha_{j,n}}$.  For $w\in\Omega$,
\begin{equation}\label{eq:cauchy-bound}
 |C_n(w)|\leq\frac1n\sum_{j=1}^n
 \frac1{|w|-|\alpha_{j,n}|}
 \leq\frac1{|w|-1}.
\end{equation}
Thus $(C_n)$ is locally bounded on $\Omega$ and hence is a normal family.  Choose a subsequence converging locally uniformly on all of $\Omega$ to a holomorphic function $C$.  On the given open subset this subsequence also converges to $c$, so the identity theorem gives $C\equiv c$ on the connected domain $\Omega$.  Passing to the limit in~\eqref{eq:cauchy-bound} yields
$|c|\leq(|w|-1)^{-1}$ for every $|w|>1$.  Letting $|w|\to\infty$ gives $c=0$.
\end{proof}

\begin{proof}[Proof of Theorem~\ref{thm:main}]
Under the contradictory hypothesis~\eqref{eq:hypothesis}, Lemma~\ref{lem:scale} supplies a subsequence for which $x_n/n\to c$ with $0<|c|<1$.  Set
\[
 \alpha_{j,n}=z_{j,n}^{-1}\in\overline\D.
\]
Because
\[
 F_n(w)=(-1)^n\left(\prod_{j=1}^n z_{j,n}\right)
 \prod_{j=1}^n(w-\alpha_{j,n}),
\]
we have on $\Omega=\C\setminus\overline\D$
\[
 \frac1n\frac{F_n'(w)}{F_n(w)}
 =\frac1n\sum_{j=1}^n\frac1{w-\alpha_{j,n}}.
\]
Proposition~\ref{prop:exterior-approximation} says that the left side converges uniformly to the nonzero constant $c$ on the interior of $K$.  Lemma~\ref{lem:cauchy-obstruction} says that such a constant must be zero, a contradiction.  Hence for every fixed $\lambda>0$ only finitely many $n$ admit a configuration satisfying~\eqref{eq:hypothesis}.  This proves the first assertion.

It remains to identify the exponential scale.  The first assertion gives, for every $\lambda>0$ and all sufficiently large $n$,
\[
 \Delta_n\geq\e^{-\lambda n},
 \qquad
 \Delta_n^*\geq\e^{-\lambda n}.
\]
Taking $z_1=\cdots=z_n=1$ gives
$\Delta_n\leq\Delta_n^*\leq n$.  Therefore
\[
 -\lambda\leq\liminf_{n\to\infty}\frac1n\log\Delta_n
 \leq\limsup_{n\to\infty}\frac1n\log\Delta_n\leq0,
\]
and the same inequalities hold for $\Delta_n^*$.  Letting $\lambda\downarrow0$ proves both $n$th-root limits.  Finally, if a fixed $C>1$ satisfied the requirement in Erd\H{o}s's question, then taking $\lambda=\log C$ would contradict the first assertion for every sufficiently large $n$.
\end{proof}

\begin{remark}
The theorem determines the exponential scale but not the finer order of $\Delta_n$ or $\Delta_n^*$.  It remains open within the present argument whether these quantities are bounded away from zero or decay on a subexponential, for example polynomial, scale.
\end{remark}

\section*{Disclosure of automated assistance}
GPT-5.6 Sol was used during proof exploration, source comparison, and drafting.  The written argument above is self-contained and does not use numerical computation or the output of a formal prover as a mathematical premise.  A separate Lean~4 development is available at
\url{https://github.com/hexistartop-gif/Erdos-973_solution_check-by-Lean}; any statement about the extent of formal verification should be read according to the declarations, assumptions, and remaining axioms recorded in that repository.  Neither automated assistance nor formalization is a substitute for independent mathematical review.

\end{document}